\let\theoremstyle\@undefined                        
\newtheorem{nntheorem}{\bf Theorem}
\newtheorem{nnassumption}{\bf Assumption}
\newtheorem{nndefinition}{\bf Definition}
\newtheorem{nnlemma}{\bf Lemma}
\newtheorem{nncorollary}{\bf Corollary}
\newtheorem{nnproposition}{\bf Proposition}
\newtheorem{nexample}{\bf Example}
\newenvironment{theorem}
{\begin{nntheorem}\it}
{\end{nntheorem}}
\newenvironment{nnexample}
{\begin{nexample}\rm}{\end{nexample}}
\newtheorem{nnremark}{\bf Remark}
\newenvironment{remark}{\begin{nnremark}}{\hfill \hspace*{1pt}\hfill $\circ$\end{nnremark}}
\def\epsilon{\varepsilon}
\title{\LARGE \bf
Output Feedback Control of the Linear Korteweg-de Vries Equation*}
\author{Swann Marx$^{1}$ and Eduardo Cerpa$^{2}$
\thanks{*This work has been partially supported by FONDECYT grant 1140741, MathAmSud COSIP, and CONICYT grant ACT-1106}
\thanks{$^{1}$Swann Marx is a student from the Electrical Engineering Department, \'Ecole Normale Sup\'erieure de Cachan, 61 Avenue du Pr\'esident Wilson, 94230 Cachan, France
        {\tt\small swann.marx@ens-cachan.fr}.}%
\thanks{$^{2}$Eduardo Cerpa is with  Departamento de Matem\'atica, Universidad T\'ecnica Federico
Santa Mar\'ia, Avda. Espa\~na 1680, Valpara\'iso, Chile 
{\tt\small eduardo.cerpa@usm.cl}.}%
}
\begin{document}

\maketitle
\begin{abstract}
This paper presents the design of an output feedback control for a linear Korteweg-de Vries equation. This design is based on the backstepping method which uses a Volterra transformation. An appropriate observer is introduced and the exponential stability of the closed-loop system is proven.      
\end{abstract}



\section{Introduction}

The Korteweg-de Vries equation, introduced in 1895 by the Dutch mathematicians Diederik J. Korteweg and his student Gustav de Vries, describes approximatively the behavior of long waves in a water channel of relatively shallow depth. This nonlinear partial differential equation, described by
\begin{equation}
u_t(t,x)+u_x(t,x)+u_{xxx}(t,x)+u(t,x)u_x(t,x)=0,
\end{equation}
has been deeply studied in the controllability sense (see \cite{rosier-zhang, cerpa-rivas-zhang, cerpa2013control} and the references therein). By considering di\-ffe\-rent boundary conditions on an interval $[0,L]$ and different boundary actuators, we get control results of different nature. Roughly speaking, the system is exactly controllable when the control acts from the right endpoint $x=L$, and null-controllable when the control acts from the left endpoint $x=0$.

In this article we focus, as a first step opening further research, on the stabilizability problem for the linear Korteweg-de Vries equation with a control acting on the left Dirichlet boundary condition. 
The studied system can be written as follows:
\begin{equation}
\label{lKdV_equation_ld}
\left\{
\begin{split}
&u_t+u_{x}+u_{xxx}=0,\\
&u(t,0)=\kappa(t),\: u(t,L)=0,\: u_x(t,L)=0,\\
&u(0,x)=u_0(x),
\end{split}
\right.
\end{equation}
where $\kappa$ denotes the control input and $u_0$ is the initial condition.

Some full state feedback controls have already been designed in the literature. Let us mention \cite{cerpa2009rapid} where a Gramian-based method is applied in the case where the control acts on the right endpoint and \cite{tang2013stabKdV} where the backstepping method is applied with different boundary conditions. With the same approach than the last paper, we have \cite{cerpa_coron_backstepping} where system \eqref{lKdV_equation_ld} is considered.

However, in most cases, we have no access to the full state of the system, and it is more realistic to design an output feedback control, i.e.,  a feedback law depending only on some partial measurements of the state. 

For linear and autonomous finite-dimension systems, a stabilizability and an observability assumptions are sufficient to ensure that the separation principle holds. In other words, if there exists a controller, which asymptotically stabilizes  the origin of the system and an observer which converges asymptotically to the state system, the output feedback built from this observer and this state feedback asymptotically stabilizes  the origin of the system. In a PDE framework this principle is no longer true and the stability of the closed-loop system is not guaranteed.

The basic question to state the problem is which kind of measurements are we going to consider. The case of a boundary measurement is the most challenging one and the natural choice for the KdV equation \eqref{lKdV_equation_ld} should be $y(t)=u_x(t,0)$. Unfortunately, the system is not observable with this choice. In fact, if $L=2\pi$, then the stationary solution $u(t,x)=1-\cos(x)$, satisfies \eqref{lKdV_equation_ld} with $\kappa(t)=0$ and in addition $u_x(t,2\pi)=0$ for any time $t$. The length $2\pi$ is not the only one for which observability does not hold (see e.g. \cite{rosier1997kdv,cerpa2013control}).

In this paper we consider the output given by 
\begin{equation}
\label{measurement}
y(t)=u_{xx}(t,L).
\end{equation}
By using this measurement, we build an observer and apply the backstepping method to design an output feedback control which exponentially stabilizes the closed-loop system. The trace $u_{xx}(t,L)$ can be considered itself as a boundary condition or even a control (\cite{cg,cerpa-rivas-zhang}) and therefore the choice of this output is not an artificial one. 


This paper is organized as follows. In Section \ref{main_results}, we state our main result. Section \ref{state_feedback_design} is devoted to recall the state feedback control designed in \cite{cerpa_coron_backstepping}. In \ref{regularities_results}, we state some regularity results needed to consider the measurement \eqref{measurement} as a continuous function. The observer is built in Section \ref{observer_design}. In Section \ref{stability_analysis_output_feedback}, the stability of the closed loop controller-observer system is proven. Finally, Section \ref{con} states some conclusions.

\section{Main Result}
\label{main_results}

Based on \cite{krstic_smyshlyaev_backstepping} and \cite{smyshlyaev2005backstepping}, we construct the following observer:
\begin{equation}
\label{linear_KdV_observer}
\left\{
\begin{split}
&\hat{u}_t+\hat{u}_x+\hat{u}_{xxx}+p_1(x)[y(t)-\hat{u}_{xx}(t,L)]=0,\\
&\hat{u}(t,0)=\kappa(t),\: \hat{u}(t,L)=\hat{u}_x(t,L)=0,\\
&\hat{u}(0,x)=0.
\end{split}
\right.
\end{equation}

\begin{nntheorem}\label{main-th} For any $\lambda>0$, there exist a feedback law $\kappa(t):=\kappa(\hat{u}(t,x))$, a function $p_1=p_1(x)$, and a constant $C>0$ such that the coupled system \eqref{lKdV_equation_ld}-\eqref{measurement}-\eqref{linear_KdV_observer}
is globally exponentially stable with a decay rate equals to $\lambda$, i.e., for any $u_0\in H^3(0,L)$ we have
\begin{equation}
\begin{split}
\Vert u(t,\cdot)\Vert_{H^3(0,L)}+&\Vert \hat{u}(t,\cdot)\Vert_{L^2(0,L)}\leq
C e^{-\lambda t}\Vert u_0\Vert_{H^3(0,L)}
\end{split}
\end{equation}

\end{nntheorem}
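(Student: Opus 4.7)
The approach is to combine an output-injection backstepping observer with the state feedback of \cite{cerpa_coron_backstepping} and then analyse the resulting cascade. Introducing the observation error $\tilde u := u - \hat u$, subtracting \eqref{linear_KdV_observer} from \eqref{lKdV_equation_ld} and using that both equations share the boundary value $\kappa(t)$ at $x=0$, we obtain the autonomous error system
\begin{equation*}
\left\{
\begin{split}
&\tilde u_t + \tilde u_x + \tilde u_{xxx} - p_1(x)\,\tilde u_{xx}(t,L) = 0,\\
&\tilde u(t,0) = 0,\ \tilde u(t,L) = 0,\ \tilde u_x(t,L) = 0,\\
&\tilde u(0,\cdot) = u_0.
\end{split}
\right.
\end{equation*}
Because this system does not depend on $\kappa(t)$, the choice of the output-injection gain $p_1$ can be decoupled from the choice of the feedback law, which is what enables the separation-principle-like argument that follows.

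First, I would design $p_1(x)$ through a Volterra backstepping transformation
\begin{equation*}
\tilde w(t,x) = \tilde u(t,x) - \int_x^L P(x,y)\,\tilde u(t,y)\,dy,
\end{equation*}
chosen so that $\tilde w$ satisfies the exponentially stable target system
\begin{equation*}
\left\{
\begin{split}
&\tilde w_t + \tilde w_x + \tilde w_{xxx} + \lambda\,\tilde w = 0,\\
&\tilde w(t,0) = 0,\ \tilde w(t,L) = 0,\ \tilde w_x(t,L) = 0.
\end{split}
\right.
\end{equation*}
Matching terms after substitution yields a well-posed PDE for the kernel $P$ on the triangle $\{0 \leq x \leq y \leq L\}$ together with an explicit formula for $p_1(x)$ in terms of $P(x,L)$ and its derivatives, in the spirit of \cite{krstic_smyshlyaev_backstepping,smyshlyaev2005backstepping}. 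The Volterra structure makes the transformation boundedly invertible on $L^2$ and on $H^3$, while the Lyapunov functional $\tfrac{1}{2}\int_0^L \tilde w^2\,dx$ yields $\tfrac{d}{dt}\|\tilde w\|_{L^2}^2 \leq -2\lambda\|\tilde w\|_{L^2}^2$ thanks to the cancellations produced by the boundary conditions. Propagating this through the transformation and using the equation to lift regularity then gives exponential decay of $\tilde u$ in $H^3$ at rate $\lambda$.

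Second, I would take $\kappa(t) := F(\hat u(t,\cdot))$ to be the state feedback constructed in \cite{cerpa_coron_backstepping} for \eqref{lKdV_equation_ld}, which exponentially stabilises the nominal closed loop at rate $\lambda$. With this choice, the observer equation rewrites as
\begin{equation*}
\hat u_t + \hat u_x + \hat u_{xxx} = -p_1(x)\,\tilde u_{xx}(t,L),
\end{equation*}
i.e., it is exactly the nominal closed loop perturbed by a source term which, by the first step, decays exponentially at rate $\lambda$. A variation-of-constants estimate on the semigroup of the nominal closed loop, or equivalently a Lyapunov argument using the target-system norm associated with the feedback $F$, then provides an exponential estimate of $\hat u$ in $L^2$. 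Splitting $u = \hat u + \tilde u$ and combining the two decays yields the claimed inequality.

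The main obstacle is the trace $\tilde u_{xx}(t,L)$, which appears both as a nonlocal coefficient inside the error dynamics and as the forcing of the observer equation: one has to show that it is well defined as a continuous function of time, that the error system is well-posed in spite of this nonlocal term, and that this trace inherits the exponential decay of $\tilde u$. This is precisely the role of the hidden regularity results announced for Section \ref{regularities_results}, which allow one to work at the $H^3$ level throughout and to absorb the forcing term into a weighted Lyapunov functional of the form $V = A\|\tilde w\|^2 + B\|\hat w\|^2$, with $\hat w$ the backstepping transform of $\hat u$ and $A/B$ chosen large enough to dominate the cross terms coming from $p_1(x)\tilde u_{xx}(t,L)$. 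Once this cross-coupling is controlled, the decoupled cascade structure ensures that any $\lambda>0$ is achievable.
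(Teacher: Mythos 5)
Your proposal follows essentially the same route as the paper: a backstepping observer whose output-injection gain $p_1$ comes from a Volterra kernel on the triangle (the paper writes the transformation in the inverse direction, $\tilde u=\Pi_o(\tilde w)$, so that $p_1(x)=p(x,L)$ directly, but this is only a difference of convention), the state feedback of \cite{cerpa_coron_backstepping} evaluated on $\hat u$, and a cascade analysis of the coupled target system in which the only coupling is the trace $\tilde w_{xx}(t,L)$ forcing the $\hat w$-equation. The one point where your write-up is too weak as literally stated is the Lyapunov functional $A\Vert\tilde w\Vert^2+B\Vert\hat w\Vert^2$: the trace $\tilde w_{xx}(t,L)$ cannot be dominated by the $L^2$-norm of $\tilde w$, and the paper closes the estimate precisely by adding the term $V_3=\tfrac B2\Vert\tilde w_t\Vert^2_{L^2}$ (equivalent, via the equation, to $\Vert\tilde w_{xxx}\Vert^2_{L^2}$ and hence to the $H^3$-norm), which decays at the same rate because $\tilde w_t$ solves the same target system, and which controls the trace through the multiplier $x\tilde w_{xx}$ as in \eqref{ISS_final}. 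Since you already identify the trace term as the main obstacle and invoke working at the $H^3$ level, this is a repair of the stated functional rather than a change of strategy.
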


\section{Full state feedback design}
\label{state_feedback_design}

In \cite{cerpa_coron_backstepping} the following Volterra transformation is introduced
\begin{equation}\label{mapk}
w(x)=\Pi(u(x)):=u(x)-\int_x^Lk(x,y)u(y)dy.
\end{equation}
The function $k$ is chosen such that the trajectory $u=u(t,x)$, solution of (\ref{lKdV_equation_ld}), with control
\begin{equation}\label{feedback} \kappa(t)=\int_0^Lk(0,y)u(t,y)dy,
\end{equation}
is  mapped into the trajectory $w=w(t,x)$, solution of the linear system
\begin{equation}
\label{sf_voltransf}
\left\{
\begin{split}
&w_t+w_x+w_{xxx}+\lambda w=0,\\
&w(t,0)=w(t,L)=w_x(t,L)=0,
\end{split}
\right.
\end{equation}
which is exponentially stable if $\lambda>0$. In fact, from a Lyapunov approach, we get
\begin{equation}\label{calcul}
\begin{split}
\frac{d}{dt}\int_0^L |w(t,x)|^2dx &=-|w_x(t,0)|^2-2\lambda\int_0^L |w(t,x)|^2dx\\
&\leq -2\lambda\int_0^L |w(t,x)|^2dx,
\end{split}
\end{equation}
which gives an exponential decay rate equals to $\lambda$ for the $L^2-$norm of the state $w$.

The kernel function $k=k(x,y)$ is characterized by:
\begin{equation}
\label{sf_kernel}
\left\{
\begin{split}
&k_{xxx}+k_{yyy}+k_x+k_y=-\lambda k,\text{ in }\mathcal{T},\\
&k(x,L)=0,\text{ in } [0,L],\\
&k(x,x)=0,\text{ in } [0,L],\\
&k_x(x,x)=\frac{\lambda}{3}(L-x),\text{ in }[0,L],
\end{split}
\right.
\end{equation}
where $\mathcal{T}:=\lbrace (x,y)/x\in [0,L],y\in [x,L]\rbrace$. The solution of (\ref{sf_kernel}) exist. This is proved by using the method of successive approximations. Unlikely the case of heat or wave equations, we do not have an explicit solution.

In \cite{cerpa_coron_backstepping} it is proved that the transformation \eqref{mapk} linking (\ref{lKdV_equation_ld}) and (\ref{sf_voltransf}) is invertible, continuous and its inverse is also continuous. Therefore, the exponential decay for $w$, solution of \eqref{sf_voltransf}, implies the exponential decay for the solution $u$ controlled by \eqref{feedback}. Thus, with this method, the following theorem is proven.

\begin{theorem} (State feedback stabilization for KdV (\cite{cerpa_coron_backstepping})

For any $\lambda>0$, there exist a feedback control law $\kappa=\kappa(u(t,.))$ and $C>0$ such that
\begin{equation}
\Vert u(t,.)\Vert_{L^2(0,L)}\leq Ce^{-\lambda t}\Vert u_0\Vert_{L^2(0,L)}
\end{equation}
for any solution of \eqref{lKdV_equation_ld}-\eqref{feedback}.
\end{theorem}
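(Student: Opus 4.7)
The plan is to use the backstepping framework sketched in the section. I would exhibit a Volterra transformation $\Pi$ of the form \eqref{mapk} that, for a suitably chosen kernel $k$ on $\mathcal{T}$ and with the feedback \eqref{feedback} as boundary input, maps any smooth solution $u$ of \eqref{lKdV_equation_ld} to a solution $w=\Pi(u)$ of the target system \eqref{sf_voltransf}. The Lyapunov computation \eqref{calcul} already yields the exponential decay $\|w(t,\cdot)\|_{L^2(0,L)}\leq e^{-\lambda t}\|w(0,\cdot)\|_{L^2(0,L)}$. Provided $\Pi$ is a topological isomorphism on $L^2(0,L)$, this decay transfers to $u$ with a constant $C$ equal to the product of the norms of $\Pi$ and $\Pi^{-1}$, which is exactly the claim.

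First, I would derive the kernel PDE \eqref{sf_kernel}. Differentiating $w(t,x)=u(t,x) - \int_x^L k(x,y)u(t,y)\,dy$ in $t$ and substituting $u_t = -u_x - u_{xxx}$, then integrating by parts three times in $\int_x^L k(x,y)u_{yyy}(t,y)\,dy$ and once in $\int_x^L k(x,y)u_y(t,y)\,dy$, one writes
\[
w_t + w_x + w_{xxx} + \lambda w
\]
as a sum of an interior term $-\int_x^L\bigl(k_{xxx}+k_{yyy}+k_x+k_y+\lambda k\bigr)u(t,y)\,dy$ and boundary contributions at $y=x$ and $y=L$. Requiring this to vanish identically in $u$ forces the interior PDE, and, after using $u(t,L)=u_x(t,L)=0$, the traces $k(x,L)=0$, $k(x,x)=0$, $k_x(x,x)=\lambda(L-x)/3$. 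Finally, the condition $w(t,0)=0$ uniquely determines the boundary control \eqref{feedback}.

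Second, I would establish existence (and enough regularity) of a solution of \eqref{sf_kernel}. After a change of variables trivialising the triangular domain (e.g.\ $s=y-x$, $r=y+x$), \eqref{sf_kernel} is recast as a fixed-point equation $k=F[k]$ where $F$ has Volterra structure; one then proves convergence of the successive approximations $k^{(n+1)}=F[k^{(n)}]$, $k^{(0)}=0$, in a weighted sup norm on $\mathcal{T}$ by exploiting the factorial decay that is characteristic of iterated Volterra kernels. This analytic construction is the main technical obstacle, since, contrary to the heat or wave cases, no closed-form kernel is available.

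Finally, I would verify that $\Pi=I-K$, with $K$ the Volterra integral operator of kernel $k$, is a bicontinuous map of $L^2(0,L)$. Continuity of $k$ on the compact triangle $\mathcal{T}$ makes $K$ bounded on $L^2(0,L)$, and the Volterra structure ensures that the Neumann series $\sum_{n\geq 0} K^n$ converges in operator norm, giving a bounded inverse $\Pi^{-1} = I-L$ of the same form. There thus exist $c_1,c_2>0$ with $c_1\|u\|_{L^2(0,L)}\leq \|\Pi(u)\|_{L^2(0,L)}\leq c_2\|u\|_{L^2(0,L)}$. Combining this with the decay of $w$ produces the announced estimate with $C=c_2/c_1$. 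Well-posedness of \eqref{lKdV_equation_ld}-\eqref{feedback} in $L^2(0,L)$, needed to justify the computations by a density argument from smooth data, follows from standard semigroup theory for third-order boundary-value problems of KdV type.
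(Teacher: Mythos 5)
Your proposal follows essentially the same route as the paper's (sketched) argument: the Volterra transformation \eqref{mapk} with kernel satisfying \eqref{sf_kernel}, existence of $k$ by successive approximations, bicontinuity of $\Pi$, and transfer of the Lyapunov decay \eqref{calcul} from $w$ back to $u$. The only cosmetic difference is that you invert $\Pi$ via the Neumann series for Volterra operators rather than exhibiting the inverse as another Volterra transformation, which is an equally standard and valid device.
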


\section{Regularity Result}
\label{regularities_results}

As we said in the introduction, we consider
\begin{equation}
y(t)=u_{xx}(t,L)
\end{equation}
 as a partial measurement of the solution. However, since we have the trace of the second derivative with respect to $x$ of $u$, we need a regularity stronger than in \cite{cerpa_coron_backstepping}. Indeed, we ask the output $y(t)$ to be a continuous function. Thus we have the following lemma.

\begin{nnlemma} \label{lemma_wp} Let us consider system 
\begin{equation}
\left\{
\begin{split}
&u_t+u_{x}+u_{xxx}=0,\\
&u(t,0)=\kappa(t),\: u(t,L)=0,\: u_x(t,L)=0,\\
&u(0,x)=u_0(x),
\end{split}
\right.
\end{equation}
where $u_0\in H^3(0,L)$ and $\kappa(t)\in H^1(0,T)$. Then $u\in C([0,T],H^3(0,L))\cap L^2(0,T;H^4(0,L))$
and $u_{xx}(\cdot,L)\in C([0,T])$.
\end{nnlemma}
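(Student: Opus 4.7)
The plan is to reduce to a problem with homogeneous boundary data by a lift, apply semigroup theory to recover the $C([0,T];H^3)$ component, and then invoke the classical hidden-regularity estimate of the linear KdV equation to upgrade to $L^2(0,T;H^4)$; the continuity of the trace $u_{xx}(\cdot,L)$ will follow from the Sobolev embedding $H^3(0,L)\hookrightarrow C^2([0,L])$.

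First I would pick a fixed smooth cutoff $\psi\in C^\infty([0,L])$ with $\psi(0)=1$ and $\psi\equiv 0$ in a neighborhood of $x=L$, set $\phi(t,x):=\kappa(t)\psi(x)$, and work with $v:=u-\phi$. The new unknown satisfies a linear KdV equation with homogeneous boundary conditions $v(t,0)=v(t,L)=v_x(t,L)=0$, forcing $g:=-\kappa_t\psi-\kappa\psi'-\kappa\psi'''$, and initial datum $v_0:=u_0-\kappa(0)\psi$. Since $\kappa\in H^1(0,T)\hookrightarrow C([0,T])$ and $\psi$ is smooth, $g\in L^2(0,T;H^m(0,L))$ for every $m$; under the tacit compatibility $u_0(0)=\kappa(0)$ (together with the boundary conditions already imposed on $u_0$), the initial datum belongs to the domain $D(A):=\{w\in H^3(0,L):w(0)=w(L)=w_x(L)=0\}$ of the operator $Aw:=-w_x-w_{xxx}$ on $L^2(0,L)$.

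A short integration by parts yields $\langle Aw,w\rangle_{L^2}=-\tfrac12|w_x(0)|^2\leq 0$, and a symmetric computation shows that $A^*$ is dissipative on its natural domain, so by Lumer-Phillips $A$ generates a contraction $C_0$-semigroup on $L^2(0,L)$. The inhomogeneous Cauchy theory then gives the strong solution $v\in C([0,T];D(A))$, hence $u=v+\phi\in C([0,T];H^3(0,L))$. To obtain in addition $u\in L^2(0,T;H^4(0,L))$, I would apply the multiplier method in the spirit of \cite{rosier1997kdv}: at the base level, multiplying the equation for $v$ by $(L-x)v$ and integrating yields $v\in L^2(0,T;H^1)$ together with a boundary trace on $v_x(\cdot,0)$; differentiating the equation in $x$ and iterating the same multiplier three more times on the derived quantities transfers the gain successively up to $v_{xxxx}\in L^2(0,T;L^2)$.

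The main technical obstacle is the bookkeeping of boundary contributions in this iterated multiplier argument: successive integrations by parts generate traces such as $v_{xx}(t,L)$, $v_{xxx}(t,L)$, and $v_{xxx}(t,0)$ which are not zero a priori and must be absorbed using the $C([0,T];H^3)$ bound already in hand, together with Sobolev interpolation between $H^3$ and $H^4$ in space and a Gronwall argument in time. Once the regularity $u\in C([0,T];H^3(0,L))\cap L^2(0,T;H^4(0,L))$ is secured, the claim $u_{xx}(\cdot,L)\in C([0,T])$ is immediate: the evaluation map $w\mapsto w''(L)$ is continuous from $H^3(0,L)$ into $\mathbb{R}$ by the Sobolev embedding $H^3(0,L)\hookrightarrow C^2([0,L])$, and composing it with the continuous curve $t\mapsto u(t,\cdot)\in H^3(0,L)$ yields the desired continuous function of $t$.
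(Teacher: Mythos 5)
There is a genuine gap at the semigroup step. After your lift, the forcing is $g(t,\cdot)=-\kappa_t(t)\psi(\cdot)-\kappa(t)\bigl(\psi'+\psi'''\bigr)(\cdot)$, and the problematic term is $-\kappa_t(t)\psi(x)$: since $\kappa\in H^1(0,T)$ only, this term is merely $L^2$ in time, and since the lift forces $\psi(0)=1$, the profile $\psi$ does \emph{not} belong to $D(A)=\{w\in H^3: w(0)=w(L)=w'(L)=0\}$. The standard inhomogeneous Cauchy theory upgrades the mild solution to a strong one in $C([0,T];D(A))$ only when $g\in W^{1,1}(0,T;L^2)$ (which would require $\kappa_{tt}$) or $g\in L^1(0,T;D(A))$ (which fails because $\psi(0)\neq 0$). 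So under the stated hypotheses your argument delivers only $v\in C([0,T];L^2(0,L))$, and the whole chain collapses: without $C([0,T];H^3)$ you have neither the trace continuity nor the a priori bound you planned to use to absorb the boundary terms in the iterated multiplier argument. That iteration is itself fragile, since differentiating the equation in $x$ destroys the homogeneous boundary conditions that make the $(L-x)$-multiplier work, producing uncontrolled traces such as $v_{xx}(t,0)$.

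The paper sidesteps exactly this difficulty by differentiating in \emph{time} rather than lifting in space: $v:=u_t$ satisfies the same boundary-value problem with Dirichlet datum $\dot\kappa\in L^2(0,T)$ at $x=0$ and initial datum $-u_0'''-u_0'\in L^2(0,L)$, and the sharp well-posedness result of Glass--Guerrero for the KdV boundary-value problem with $L^2(0,T)$ boundary data gives $u_t\in C([0,T];L^2)\cap L^2(0,T;H^1)$ directly; the equation $u_{xxx}=-u_t-u_x$ then converts this time-regularity into the claimed spatial regularity $u\in C([0,T];H^3)\cap L^2(0,T;H^4)$, and the trace continuity follows as in your last step. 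If you want to keep your route, you must either strengthen the hypothesis to $\kappa\in H^2(0,T)$ (so the lifted forcing is $W^{1,1}$ in time) or import the admissibility/hidden-regularity estimate for $L^2$ Dirichlet data, which is precisely the external input your proposal is missing.
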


\begin{proof}
This proof is based on \cite{glass2008some}. Let us consider the following coordinates transformation
\begin{equation}
v=u_t.
\end{equation}
The dynamics of $v$ can be written as follows:
\begin{equation}
\label{observability_new_coordinates}
\left\{
\begin{split}
&v_t+v_{xxx}+v_x=0,\\
&v(t,0)=\dot{\kappa}(t)\in L^2(0,T),\: v(t,L)=v_x(t,L)=0,\\
&v(0,x)=(-u_0^{\prime\prime\prime}-u_0^\prime)\in L^{2}(0,L).
\end{split}
\right.
\end{equation}

By already known well-posedness results for KdV (\cite{glass2008some}), we get 
\begin{equation}
\begin{split}
&v\in C([0,T],L^{2}(0,L))\cap L^2(0,T;H^1(0,L))\\
\Rightarrow &u_t\in C([0,T],L^{2}(0,L))\cap L^2(0,T;H^1(0,L))\\
\Rightarrow &u\in C([0,T],H^3(0,L))\cap L^2(0,T;H^4(0,L))\\
\Rightarrow &u_{xx}\in C([0,T],H^1(0,L))\cap L^2(0,T;H^2(0,L))\\
\Rightarrow &u_{xx}\in C([0,T]\times[0,L])\\
\Rightarrow &u_{xx}(\cdot,L)\in C([0,T])\\
\end{split}
\end{equation}

Thus it concludes the proof of Lemma \ref{lemma_wp}.
\end{proof}

\section{Observer design}
\label{observer_design}

Based on \cite{krstic_smyshlyaev_backstepping}, and more precisely on \cite{smyshlyaev2005backstepping}, we can write, for system
\begin{equation}
\label{lKdV_observer}
\left\{
\begin{split}
&u_t+u_x+u_{xxx}=0,\\
&u(t,0)=\kappa(t),\: u(t,L)=u_x(t,L)=0,\\
&y(t)=u_{xx}(t,L),
\end{split}
\right.
\end{equation}
the corresponding observer
\begin{equation}
\label{linear_KdV_observer2}
\left\{
\begin{split}
&\hat{u}_t+\hat{u}_x+\hat{u}_{xxx}+p_1(x)[u_{xx}(t,L)-\hat{u}_{xx}(t,L)]=0,\\
&\hat{u}(t,0)=\kappa(t),\: \hat{u}(t,L)=\hat{u}_x(t,L)=0.
\end{split}
\right.
\end{equation}

The construction of the observer is based on the finite-dimensional design for $\dot{x}=Ax+Bu$,\: $y=Cx$, 
which proposes the observer $\dot{\hat{x}}=A\hat{x}+Bu+L(y-C\hat{x})$. If we consider 
 the error $e=x-\hat{x}$, then $\dot{e}=(A-LC)e$, and we have to look for a matrix $L$ insuring a good performance. Because of the infinite-dimensional framework we are working in, a matrix $L$ is not enough. Thus we need a function $p_1(x)$.

In our case, we consider the error $\tilde{u}:=u-\hat{u}$, which satisfies
\begin{equation}
\label{linear_KdV_error}
\left\{
\begin{split}
&\tilde{u}_t+\tilde{u}_x+\tilde{u}_{xxx}-p_1(x)\tilde{u}_{xx}(t,L)=0,\\
&\tilde{u}(t,0)= \tilde{u}(t,L)=\tilde{u}_x(t,L)=0.\\
\end{split}
\right.
\end{equation}

Given a positive parameter $\lambda$, we look for a transformation $\Pi_o$ defined by
\begin{equation}
\label{transformation}
\tilde{u}(x)=\Pi_o(\tilde{w}(x))=\tilde{w}(x)-\int_x^L p(x,y)\tilde{w}(y)dy
\end{equation}
such that the trajectory $\tilde{u}$, solution of (\ref{linear_KdV_error}) is mapped into the trajectory $\tilde w=\tilde w(t,x)$, solution of the linear system
\begin{equation}
\label{linear_KdV_transformed}
\left\{
\begin{split}
&\tilde w_t+\tilde w_x+\tilde w_{xxx}+\lambda \tilde w=0,\\
&\tilde w(t,0)=0,\: \tilde w(t,L)=0,\: \tilde w_x(t,L)=0,
\end{split}
\right.
\end{equation}
which is exponentially stable with a decay rate depending on the value of $\lambda$ as shown in \eqref{calcul}.

Now, the key step is to find the kernel $p=p(x,y)$ such that $\tilde{u}(t,x)=\Pi_o(\tilde w(t,x))$ satisfies (\ref{linear_KdV_error}). By focusing on (\ref{transformation}) and using the Leibniz rules, we get:

$\bullet$ Differentiation along (\ref{linear_KdV_transformed})
\begin{equation}
\label{observer_difftime}
\begin{split}
\tilde{u}_t &=\tilde{w}_t(x)-\int_x^L p(x,y)[-\tilde{w}_y(y,t)-\tilde{w}_{yyy}(y,t)-\lambda\tilde{w}(y,t)]dy\\
&=\tilde{w}_t(x)-\int_x^L (-\lambda p(x,y) +p_y(x,y)+p_{yyy}(x,y))\tilde{w}(y,t)dy\\
&+p(x,L)\tilde{w}(L,t)-p(x,x)\tilde{w}(x,t)+p(x,L)\tilde{w
}_{xx}(L,t)\\ 
&-p(x,x)\tilde{w}_{xx}(x,t)+p_{y}(x,x)\tilde{w}_{x}(x)-p_{y}(x,L)\tilde{w}_{x}(L,t)\\
&+p_{yy}(x,L)\tilde{w}(L,t)-p_{yy}(x,x)\tilde{w}(x,t)
\end{split}
\end{equation}

$\bullet$ Three differentations with respect to the variable $x$
\begin{equation}
\label{observer_diffx1}
\begin{split}
\tilde{u}_x(x,t)=\tilde{w}_x(x,t)+p(x,x)\tilde{w}(x,t)-\int_x^L p_x(x,y)\tilde{w}(t,y)dy
\end{split}
\end{equation}

\begin{equation}
\label{observer_diffx2}
\begin{split}
\tilde{u}_{xx}(t,x)&=\tilde{w}_{xx}(t,x)+\frac{d}{dx}p(x,x)\tilde{w}(t,x)+p(x,x)\tilde{w}_x(t,x)\\
&+p_x(x,x)\tilde{w}(t,x)-\int_x^L p_{xx}(x,y)\tilde{w}(t,y)dy
\end{split}
\end{equation}

\begin{equation}
\label{observer_diffx3}
\begin{split}
\tilde{u}_{xxx}(t,x) &=\tilde{w}_{xxx}(t,x)+\frac{d^2}{dx^2}p(x,x)\tilde{w}(t,x)\\
&+2\frac{d}{dx}p(x,x)\tilde{w}_x(t,x)\\
&+p(x,x)\tilde{w}_{xx}(t,x)+\frac{d}{dx}p_x(x,x)\tilde{w}(t,x)\\
&+p_x(x,x)\tilde{w}_x(t,x)+p_{xx}(x,x)\tilde{w}(t,x)\\
&-\int_x^L p_{xxx}(x,y)\tilde{w}(t,x)dy
\end{split}
\end{equation}

By adding (\ref{observer_difftime}), (\ref{observer_diffx1}) and (\ref{observer_diffx3}), we get
\begin{equation}
\label{link_transformation}
\begin{split}
&\tilde{u}_t+\tilde{u}_x+\tilde{u}_{xxx}-p_1(x)\tilde{u}_{xx}(L)=\\
&\tilde{w}_t(t,x)+\tilde{w}_{x}(t,x)+\tilde{w}_{xxx}(t,x)+\lambda\tilde{w}(t,x)\\
&-\int_x^L (-\lambda p(x,y) +p_y(x,y)+p_{yyy}(x,y)\\
&+p_{xxx}(x,y)+p_x(x,y))\tilde{w}(y,t)dy\\
&+\tilde{w}_x(t,x)\left(2\frac{d}{dx}p(x,x)+p_x(x,x)+p_y(x,x)\right)\\
&+\tilde{w}(t,x)\displaystyle\left(p_{xx}(x,x)+\frac{d^2}{dx^2}p(x,x)+\frac{d}{dx}p_x(x,x)\right.\\
&\null\hfill\displaystyle\left. -p_{yy}(x,x)-\lambda\right)+p(x,L)\tilde{w}(L,t)+(p(x,L)-p_1(x))\tilde{w}_{xx}(L,t)\\
&-p_y(x,L)\tilde{w}_{x}(L,t).
\end{split}
\end{equation}

From this equation, we get four conditions:

\begin{itemize}
\item[1.] Equation for $(x,y)\in \mathcal{T}$:
\begin{equation}
\label{1.observer}
\begin{split}
&p_{yyy}(x,y)+p_{xxx}(x,y)
+p_y(x,y)+p_x(x,y)=\lambda p(x,y).
\end{split}
\end{equation}
\item[2.] First boundary condition on $(x,x)$ for $x\in [0,L]$:
\begin{equation}
\label{2.observer}
2\frac{d}{dx}p(x,x)+p_x(x,x)+p_y(x,x)=0.
\end{equation}
\item[3.] Second boundary condition on $(x,x)$ for $x\in [0,L]$:
\begin{equation}
\label{3.observer}
\begin{split}
&\frac{d^2}{dx^2}p(x,x)+\frac{d}{dx}p_x(x,x)\\
&+p_{xx}(x,x)-p_{yy}(x,x)-\lambda=0.
\end{split}
\end{equation}
\item[4.] Appropriate choice of $p_1$:
\begin{equation}
\label{4.observer}
p(x,L)=p_1(x).
\end{equation}  
\end{itemize}
Recall that $\mathcal{T}:=\lbrace (x,y)/x\in [0,L],\: y\in [x,L]\rbrace$.

Moreover, note also that, by setting $x=0$ in (\ref{transformation}), we get:

\begin{equation}
\label{5.observer}
p(0,y)=0,\hspace{0.2cm}\forall y\in[0,L].
\end{equation}

In addition, we have

$$\tilde{w}(t,L)=\tilde{u}(t,L)=\tilde{w}_x(t,L)=0.$$

Finally, the kernel $p$ satisfies the following PDE:
\begin{equation}
\left\{
\begin{split}
&p_{xxx}(x,y)+p_{yyy}(x,y)\\
&+p_{y}(x,y)+p_x(x,y)=\lambda p(x,y),\hspace{0.3cm}(x,y)\in\mathcal{T},\\
&p(x,x)=0,\hspace{0.3cm}x\in [0,L],\\
&p_x(x,x)=\frac{\lambda}{3}x,\hspace{0.3cm}x\in [0,L],\\
&p(0,y)=0,\hspace{0.3cm}y\in [0,L].
\end{split}
\right.
\end{equation}

Let us make the following change of variable:

\begin{equation}
\bar{x}=L-y,\hspace{0.5cm}\bar{y}=L-x,
\end{equation}
and define $F(\bar{x},\bar{y}):=p(x,y)$. Hence:
\begin{equation}
\left\{
\begin{split}
&F_{\bar{x}\bar{x}\bar{x}}(\bar{x},\bar{y})+F_{\bar{y}\bar{y}\bar{y}}(\bar{x},\bar{y})\\
&+F_{\bar{y}}(\bar{x},\bar{y})+F_{\bar{x}}(\bar{x},\bar{y})=-\lambda F(\bar{x},\bar{y})\hspace{0.3cm}(\bar{x},\bar{y})\in\mathcal{T}\\
&F(\bar{x},\bar{x})=0\hspace{0.3cm}\bar{x}\in [0,L]\\
&F_{\bar{x}}(\bar{x},\bar{x})=\frac{\lambda}{3}(L-\bar{x})\hspace{0.3cm}\bar{x}\in [0,L]\\
&F(\bar{x},L)=0\hspace{0.3cm}\bar{y}\in [0,L]
\end{split}
\right.
\end{equation}

This PDE has already been studied in \cite{cerpa_coron_backstepping}, where no explicit solution has been found, but where the existence of a solution has been proved. Hence, we can conclude that the kernel $p:=p(x,y)$ exists. Note that the function $\Pi_o$ defined by (\ref{transformation}) is linear (by definition) and continuous (because of the existence of $p$). 

\section{Stability analysis of the closed loop system}
\label{stability_analysis_output_feedback}

Instead of dealing directly with the controlled state $u$ and the observer state $\hat u$, we consider the evolution of the couple $(\tilde u, \hat u)$ where $\tilde u$ stands for the error $\tilde u=u-\hat u$, as introduced in Section \ref{observer_design}.

By using the output feedback control
\begin{equation}\label{feedback2} \kappa(t)=\int_0^L k(0,y)\hat u(t,y)dy,
\end{equation}
the transformation $\Pi$ defined in \eqref{mapk} and its inverse, and the transformation  $\Pi_o$ defined in \eqref{transformation} and its inverse, we can see that $(\tilde u, \hat u)$ are mapped into $(\tilde w, \hat w)=(\Pi_o^{-1}(\tilde u),\Pi(\hat u))$ solutions of the target system
\begin{equation}\label{target3}
\left\{
\begin{split}
&\hat{w}_t+\hat{w}_{x}+\hat{w}_{xxx}+\lambda\hat{w}=\\
&-\left\{p_1(x)-\int_x^L k(x,y)p_1(y)dy\right\}\tilde{w}_{xx}(t,L),\\
&\hat{w}(0)=\hat{w}(L)=\hat{w}_x(L)=0,\\
&\tilde{w}_t+\tilde{w}_x+\tilde{w}_{xxx}+\lambda\tilde{w}=0,\\
&\tilde{w}_x(0)=\tilde{w}(L)=\tilde{w}_x(L)=0.
\end{split}
\right.
\end{equation}

Note that the parameter $\lambda$ is the same for the observer and the system itself.

Given that $\Pi,\Pi_o$ are continuous maps, invertible and their inverse maps are also continuous, the exponential stability of \eqref{target3} would imply the exponential stability of the closed loop system and therefore the proof of  Theorem \ref{main-th} will be ended.

In order to prove the exponential stability of \eqref{target3}, we use a Lyapunov argument. Let us consider the following function,

\begin{equation} V(t)=V_1(t)+V_2(t)+V_3(t),
\end{equation}
where \begin{equation} V_1(t)=\frac A 2 \int_0^L|\hat w(t,x)|^2\,dx
\end{equation}
 \begin{equation} V_2(t)=\frac B 2 \int_0^L|\tilde w(t,x)|^2\,dx
\end{equation}
 \begin{equation} V_3(t)=\frac B 2 \int_0^L|\tilde w_{t}(t,x)|^2\,dx
\end{equation}
with $A,B$ to be chosen later.

\begin{remark}\label{r1}
We can prove that this Lyapunov function is equivalent to the one obtained by replacing $V_3(t)$ by 
\begin{equation} \tilde V_3 (t)=\frac B 2 \int_0^L|\tilde w_{xxx}(t,x)|^2\,dx\end{equation}
By this, we mean that the exponential decay of one of them implies the exponential decay of the other one. In fact, we can prove that there exist positive constants $d_1,d_2$ such that
$$d_1(V_2(t)+\tilde V_3(t))\leq V_2(t)+V_3(t)\leq d_2(V_2(t)+\tilde V_3(t)).$$
\end{remark}

Taking the time derivative of the function $V(t)$, we get after some computations that
\begin{equation*}
\begin{split}
\dot V_1(t)=&A\int_0^L\hat{w}_t(t,x)\hat{w}(t,x)dx\\
&\leq (-A\lambda +D^2)\int_0^L |\hat{w}(t,x)|^2 dx + A^2|\tilde{w}_{xx}(t,L)|^2\\
&= 2\Big(-\lambda +\frac{D^2}{A} \Big)V_1(t) + A^2|\tilde{w}_{xx}(t,L)|^2 
\end{split}
\end{equation*}
where $D:=\max_{x\in [0,L]}\left\{p_1(x)-\int_x^L k(x,y)p_1(y)dy\right\}$.

From the same computation as in \eqref{calcul}, we see that 
\begin{equation*}
\begin{split}
\dot V_2(t)\leq&-2\lambda V_2(t).
\end{split}
\end{equation*}

Moreover, thanks to the regularity $H^3(0,L)$, the same computation can be applied to $z=\tilde w_t$ (see the proof of Lemma \ref{lemma_wp}) to obtain
\begin{equation*}
\begin{split}
\dot V_3(t)\leq&-2\lambda V_3(t)
\end{split}
\end{equation*}


Thus, we get:
\begin{equation*}
\begin{split}
\dot V(t)\leq&2\Big(-\lambda +\frac{D^2}{A} \Big)V_1(t)+ A^2|\tilde{w}_{xx}(t,L)|^2\\
&-2\lambda V_2(t)-2\lambda V_3(t).
\end{split}
\end{equation*}


We need to find an upper bound for $|\tilde{w}_{xx}(t,L)|^2$. We multiply
\begin{equation}
\label{calcul_ISS_KdV}
\left\{
\begin{split}
&\tilde{w}_t+\tilde{w}_x+\tilde{w}_{xxx}+\lambda \tilde{w}=0,\\
&\tilde{w}(0)=\tilde{w}(L)=\tilde{w}_x(L)=0,
\end{split}
\right.
\end{equation}
%
%
%
by $x\tilde{w}_{xx}$ and after some computations we get

\begin{equation}
\begin{split}
\label{IPP_ISS3}
|\tilde{w}_{xx}(t,L)|^2\leq &\left(\frac{1}{L}+L\right)\Vert \tilde{w}_{xx}\Vert_{L^2(0,L)}^2 \\
&+\left(2\lambda +\frac{1}{L}\right)\Vert \tilde w_x\Vert_{L^2(0,L)}^2 \\
& +\frac{1}{L}\Vert \tilde{w}_t\Vert_{L^2(0,L)}^2
\end{split}
\end{equation}
and finally the existence of $a,b>0$ such that
\begin{equation}
\label{ISS_final}
|\tilde{w}_{xx}(t,L)|^2\leq a\Vert \tilde{w}\Vert^2_{L^2(0,L)} +b\Vert\tilde{w}_t\Vert^2_{L^2(0,L)}
\end{equation}

\begin{remark} Here, we have used that the norm $\|f\|_{H^3(0,L)}$ and the norm
$\|f\|_{L^2(0,L)}+\|f_{xxx}\|_{L^2(0,L)}$ are equivalent. See also Remark \ref{r1}.
\end{remark}

We use the latter inequality to write:
\begin{equation*}
\begin{split}
\dot V(t)\leq&2\Big(-\lambda +\frac{D^2}{A} \Big)V_1(t)+ 2a \frac{A^2}{B} V_2(t)\\
&+2b \frac{A^2}{B} V_3(t)-2\lambda V_2(t)-2\lambda V_3(t).
\end{split}
\end{equation*}
Therefore, 
\begin{equation*}
\begin{split}
\dot V(t)\leq&2\Big(-\lambda +\frac{D^2}{A} \Big)V_1(t)+ 2\Big(-\lambda +\frac{aA^2}{B} \Big)V_2(t)\\
&2\Big(-\lambda +\frac{bA^2}{B} \Big)V_3(t).
\end{split}
\end{equation*}

In this way, by tuning $A,B$ large enough, we get for any $\epsilon>0$ that
\begin{equation*}
\begin{split}
\dot V(t)\leq 2\Big(-\lambda +\epsilon \Big)V(t),
\end{split}
\end{equation*}
which gives an exponential stability with decay rate as close to $\lambda$ as we want. The rapid stabilization is achieved because the parameter $\lambda$ can be chosen as large as desired.

It concludes the proof of the stability of the closed loop system with the output feedback control law depending on a boundary measurement of the state.

\section{Conclusions}\label{con}

In this paper we have designed an output feedback controller for the linear Korteweg-de Vries equation posed on a bounded interval. This boundary feedback control acts on the Dirichlet boundary condition on the left endpoint and exponentially stabilizes the system by using the boundary measurement $y(t)=u_{xx}(t,L)$. Because of this choice of $y(t)$, we have to work on a more regular framework, given by $H^3(0,L)$ as the state space. The backstepping method is applied together a classical observer in order to build the control. 

This work is the first one addressing the output feedback problem for the Korteweg-de Vries equation and opens several possible extensions. Different location for the control or different boundary conditions can be considered as in \cite{cerpa_coron_backstepping, tang2013stabKdV}. The choice of the measurement is particularly interesting. For instance, it would be nice to deal with the collocated case, which could be harder than the non-collocated problem considered in this paper. Of course, the nonlinear case also appears as a natural next step in the study of this control system. In that context, even performing simulations of the closed loop system may be an interesting challenge. 

\bibliographystyle{plain}
\bibliography{root}

\end{document}